\newcommand{\Q}{{\bf Q}}
\newcommand{\Z}{\mathbb{Z}}
\newcommand{\C}{\mathbb{C}}
\newcommand{\F}{\mathbb{F}}
\newcommand{\N}{\mathbb{N}}
\newcommand{\R}{\mathbb{R}}
\newcommand{\cE}{\mathcal{E}}
\newtheorem{theorem}{Theorem}
\newtheorem{lemma}[theorem]{Lemma}
\newtheorem*{rem}{Remark}
\newenvironment{remark}{\begin{rem}\begin{rm}}{\end{rm}\end{rem}}
\newtheorem{corollary}[theorem]{Corollary}
\begin{document}
\title
{Weakly holomorphic modular forms for some moonshine groups}  
\author[M. Lahr, R. Schulze-Pillot]{Martina Lahr, Rainer Schulze-Pillot} 

\maketitle
\begin{abstract}
In an article in the Pure and Applied Mathematics Quarterly in 2008, Duke
and Jenkins investigated a certain natural basis of the space of
weakly holomorphic modular forms for the full modular group
$SL_2({\bf Z})$. We show here that their results can be generalized to
certain moonshine groups, also allowing characters that are real on the
underlying subgroup $\Gamma_0(N)$.  
\end{abstract}
 \section{Introduction}
The name ``moonshine groups'' is commonly used for the discrete subgroups of ${\rm PSL}_2(\R)$ which
appear in connection with the famous moonshine conjectures of Conway and Norton \cite{conway-norton}, see
\cite{con-mckay-sebbar}. In connection with the study of euclidean lattices with high minima, it has been
noticed by Quebbemann in \cite{que1,que2} that for some of these, namely the groups $\Gamma_0(N)^+$ generated
by the Hecke group $\Gamma_0(N)$ and the Atkin-Lehner involutions normalizing it for $N=2,3,4,6,7,11,14,15,23$,
the theory of modular forms is very similar to that for the full modular group. In particular, for these
groups one can, using Dedekind's eta function, construct an analogue $\Delta_N$ of the discriminant function
$\Delta$, i.e., a cusp form with a simple zero at infinity and no zeros in the upper half plane. This made it
possible to transfer the notion of an extremal modular form to these groups and to study lattices which have
these extremal modular forms as their theta series.
 \vspace{0.2cm}\\
The extremal modular form for the full modular group is one of the basis functions $f_{k,m}$ of the space of weakly holomorphic modular forms of weight $k$ for the full modular group whose properties have been investigated by Duke and Jenkins in \cite{duke-jenkins}.
In this note we want to show that some of these results from \cite{duke-jenkins} can also
be generalized to the special  moonshine groups mentioned above; we can even admit quadratic characters on $\Gamma_0(N)$, suitably extended to the larger group. We plan to use our results  for the study of extremal lattices of these levels in future
work.  A simple, but so far apparently unnoticed, fact which makes this transfer possible is stated in 
Corollary \ref{hauptmodule_cor}: If $j_N$ is the hauptmodule for $\Gamma_0(N)^+$, then $q \cdot \frac{dj_N}{dq} \Delta_N$
is an Eisenstein series (as is well-known for level $N=1$). This was
initially established by the first named author for prime levels $N$ as
above with the help of
computer calculations and the available explicit $q$ - expansions  for
$j_N$ in her 2013 master's thesis Universität des Saarlandes. This
fact can of course conversely be used in order to compute the $q$ -
expansion of $j_N$ from the known expansions of $\Delta_N$ and the
relevant Eisenstein series in the cases under investigation here.
 \section{Preliminaries}
For a square free positive integer $N$ we consider the Hecke subgroup $\Gamma_0(N)=\{{a\,b\choose c\,d} \in {\rm SL}_2(\Z)~|~
c\equiv 0(N)\}$ of ${\rm SL}_2(\Z)$ and for $m|N$ the Atkin-Lehner involutions 
$W_m =\begin{pmatrix} mx & y \\ Nz & mw \end{pmatrix}$ with $\det W_m = m$ normalizing $\Gamma_0(N)$. By $\Gamma_0(N)^+ \subseteq
{GL}_2(\Q)$ we denote the subgroup generated by $\Gamma_0(N)$ and the $W_m$ for $m|N$. It is well-known that the
quotient $\Gamma_0(N)^+ \setminus  H$ of the upper half plane $H \subseteq \C$ by $\Gamma_0(N)^+$ has only one cusp class. 
Moreover, if we restrict $N$ to be one of the integers $2,3,5,6,7,11,14,15,23,$ the compactified quotient has genus zero and
$\Delta_N(z) = \prod_{m\mid N} \eta(mz)^{24/\sigma_1(N)}$, where $\eta$ is the Dedekind eta function and $\sigma_1(N)$ the divisor
sum, is a cusp form of weight $k_1(N) = \frac{12\sigma_0(N)}{\sigma_1(N)}$ for $\Gamma_0(N)$ with character
$\left(\frac{(-N)^{k_1(N)}}{a}\right)$, having a zero of order 1 in the cusp and having no zeros in $H$, see
\cite{que1,que2,harada}.
 \vspace{0.3cm}\\
Indeed, one checks that $\Delta_N$ is modular for the extended group
$\Gamma_0(N)^+$ with character $\chi^{(N)} = \psi^{k_1(N)}$,
where $\psi{a\,b\choose c\,d} = (\frac{-N}{a})$ for ${a\,b\choose
  c\,d} \in \Gamma_0(N)$ if $N$ is a prime congruent to $3$ modulo $4$
and $\psi\vert_{\Gamma_0(N)}=1$ otherwise,
$\psi(W_N) = i^{-1}$, $\psi(W_2) = 1$ for $N \in \{6,14\}$, $\psi(W_5) = 1$ for $N = 15$. (Notice that for the composite levels treated here $k_1(N)$ is even and $\psi^{k_1(N)}$ trivial on $\Gamma_0(N)$, so that there is no ambiguity in defining the involutions.)

\section{Modular forms for $\Gamma_0(N)^+$}
For an integer $k$ and a real character $\chi$ on $\Gamma_0(N)$ with $\chi(-1) = (-1)^k$ we consider the Eisenstein series
$f_k(z; \chi_1,\chi_2)$ defined in 4.7.6 of \cite{miyake} and the space $ {\cE}_k(N,\chi)$ given in 4.7.17 of 
\cite{miyake} generated
by the $f_k(\ell z; \chi_1,\chi_2)$ for characters $\chi_1 \bmod N_1$, $\chi_2 \bmod N_2$ with $\chi = \chi_1\chi_2$, $\ell N_1N_2|N$ 
and $\chi_i$ primitive modulo $N_i$ or $k = 2$, $\chi_1$ and $\chi_2$ trivial, $N_1 = 1$ and $N_2$ a prime.
 \vspace{0.3cm}\\
By Theorem 4.7.2 of \cite{miyake} the space ${\cE}_k(N,\chi)$ is the orthogonal complement  with respect to the Petersson inner product of the space of cusp forms of weight  $k$ for
$\Gamma_0(N)$ with character $\chi$ .
 \vspace{0.3cm}\\
Moreover, the proof of Theorem 4.7.2 of \cite{miyake} shows that for the special $N$ considered here the dimension 
$d_k(N,\chi)$ of ${\cE}_k(N,\chi)$ is given by
 $$\begin{array}{lll}
 d_0(N,\chi) &=& \left\{ \begin{array}{cc}
 1 & \chi = {\bf 1 }\\
 0 & \chi \not= {\bf 1}
 \end{array}\right.,
 \vspace{0.2cm}\\
 d_1(N,\chi) &=& \left\{ \begin{array}{cl}
 1 & \mbox{if } \chi \not= {\bf 1} \\
 0 & \mbox{otherwise}
 \end{array}\right.
 \end{array}$$
if $N$ is prime, $N \equiv 3(4)$,
 $$d_1(N,\chi) = \left\{ \begin{array}{cl}
 2 & \mbox{if } \chi\not= {\bf 1}\\
 0 & \mbox{otherwise}
 \end{array}\right.$$
if $N = p_1p_2$ is a product of two distinct primes, 
 $$d_2(N,\chi) = \left\{ \begin{array}{cl}
 1 & \mbox{if } \chi = {\bf 1}\\
 2 & \mbox{otherwise}
 \end{array}\right.$$
if $N$ is prime,
 $$d_2(N,\chi) = \left\{ \begin{array}{cl} 
 3 & \mbox{if } \chi = {\bf 1}\\
 4 & \mbox{otherwise}
 \end{array}\right.$$
if $N = p_1p_2$,
 $$d_k(N,\chi) = \left\{\begin{array}{cl}
 2 & \mbox{if $N$ is prime}\\
 4 & \mbox{if $N = p_1p_2$}
 \end{array}\right.$$
for $k \geq 3$.
 \vspace{0.3cm}\\
In order to study the action of the involutions on the above spaces of Eisenstein series we restrict attention  for composite levels to characters $\chi$ on $\Gamma_0(N)$ for which the operators $W_m$ of \cite{atkin-li} commute, so that it makes sense to split the space of modular forms for $\Gamma_0(N)$ with character $\chi$ into eigenspaces with respect to the involutions.
For our restricted list of $N$ this means that either $\chi$ is trivial or $N=14$ and $\chi$ the Legendre symbol modulo $7$ or $N=15$ and $\chi$ the Legendre symbol modulo $15$.

Combining the above results with Weisinger's \cite{weisinger} calculation of the action of the $W_m$ on Eisenstein series, we obtain then
 \begin{lemma}\label{eisensteinlemma}
Let $k \in \N$, $N$ a prime or a product of two distinct primes, let $\chi$ as above be a character on $\Gamma_0(N)^+$ which is real on
$\Gamma_0(N)$ with $\chi(-1) = (-1)^k$ and such that (for composite $N$) the involutions $W_m$  commute.
Then ${\cE}_k(N,\chi)^+ := M_k(\Gamma_0(N)^+,\chi) \cap {\cE}_k(N,\chi|_{\Gamma_0(N)})$
has dimension $1$ except for $k=1$, $\chi \not= \psi$, $k = 2$, $\chi = {\bf 1}$ (in which cases 
${\cE}_k(N,\chi)^+ = \{0\})$.\\
In the one-dimensional cases we write $E_k^{(\chi)}$ for the element of ${\cE}_k(N,\chi)^+$ whose Fourier expansion at $\infty$ has
constant term $1$ (if $N$ is fixed).      
 \end{lemma}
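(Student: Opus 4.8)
The plan is to realise $\cE_k(N,\chi)^+$ as a simultaneous eigenspace for the Atkin--Lehner operators and to read off its dimension from the numbers $d_k(N,\chi)$ recalled above together with Weisinger's description \cite{weisinger} of how the $W_m$ permute the standard Eisenstein series. Since $\Gamma_0(N)^+$ is generated by $\Gamma_0(N)$ and the involutions $W_m$ with $m\mid N$, a form $f\in M_k(\Gamma_0(N),\chi|_{\Gamma_0(N)})$ lies in $M_k(\Gamma_0(N)^+,\chi)$ precisely when $f|_kW_m=\chi(W_m)f$ for every $m\mid N$. Each $W_m$ preserves the Petersson-orthogonal splitting $M_k=S_k\oplus\cE_k$ over $\Gamma_0(N)$ and acts as a unitary operator with $W_m^2=\pm 1$; under the present hypotheses the $W_m$ commute on $\cE_k(N,\chi|_{\Gamma_0(N)})$ (automatically so when $N$ is prime), so this space is the direct sum of the simultaneous eigenspaces of the $W_m$ and $\cE_k(N,\chi)^+$ is exactly the one attached to the eigenvalue system $(\chi(W_m))_{m\mid N}$. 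Equivalently, $\cE_k(N,\chi)^+$ is the Eisenstein part of $M_k(\Gamma_0(N)^+,\chi)$, and since $\Gamma_0(N)^+$ has a single cusp this already forces $\dim\cE_k(N,\chi)^+\le 1$; the content of the lemma is to decide when the bound is attained.

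Next I would bring in the explicit data. For the $N$ considered here the basis $\{f_k(\ell z;\chi_1,\chi_2)\}$ of $\cE_k(N,\chi|_{\Gamma_0(N)})$ has size $d_k(N,\chi)\le 4$ and consists of the ``new'' Eisenstein series $f_k(z;\chi_1,\chi_2)$ of level $N_1N_2\mid N$ (with $\chi_1,\chi_2$ primitive, here of prime-power or trivial conductor) together with their images under $z\mapsto\ell z$. Weisinger's formulas then give: the Fricke involution at a prime-power part carries $f_k(z;\chi_1,\chi_2)$ to a nonzero multiple of $f_k(z;\chi_2,\chi_1)$ with an explicit Gauss-sum constant --- a fourth root of unity after normalisation, equal to $\pm 1$ when $k$ is even --- and permutes the operators $z\mapsto\ell z$ compatibly; for $k=2$ and the degenerate series $E_2(z)-pE_2(pz)$ one checks directly (or reads off from Weisinger's table) that the Fricke involution acts by $-1$. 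Writing these actions as matrices on the explicit basis, one reads off the eigenvalues of each $W_m$ and the multiplicity of each simultaneous eigenvalue system.

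The case analysis then runs as follows. For $k\ge 3$, and also for $k=2$ with $\chi\neq{\bf 1}$, the $W_m$ act on the $d_k(N,\chi)$-dimensional space by scaled permutations realising, up to a character, the regular representation of the group generated by the images of the $W_m$ in $\Gamma_0(N)^+/\Gamma_0(N)$; hence each of its simultaneous eigenspaces --- two when $N$ is prime, four when $N=p_1p_2$ --- is one-dimensional, in particular the one attached to $(\chi(W_m))_m$, so $\dim\cE_k(N,\chi)^+=1$. For $k=2$ and $\chi={\bf 1}$ the space $\cE_2(N,{\bf 1})$ is spanned by the forms $E_2(z)-\ell E_2(\ell z)$ with $\ell\mid N$, $\ell>1$, on each of which the relevant Fricke involution acts by $-1$, so the eigenspace for the trivial system $(\chi(W_m))_m=(1)_m$ is $\{0\}$ while the remaining eigenspaces exhaust $\cE_2(N,{\bf 1})$. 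For $k=1$ the identity $E_1^{\chi_1,\chi_2}=E_1^{\chi_2,\chi_1}$ collapses the basis and $d_1(N,\chi)$ drops to $1$ ($N$ prime) or $2$ ($N=p_1p_2$); here a closer look is required at the fourth-root-of-unity constant in Weisinger's Fricke formula and at its interaction with the action on the oldform copies, from which one obtains that $\cE_1(N,\chi)^+$ is one-dimensional exactly when $\chi=\psi$ --- which by $\chi(-1)=(-1)^1=-1$ and the definition of $\psi$ forces $N$ to be a prime $\equiv 3\pmod{4}$, the constant then coming out as $\psi(W_N)=i^{-1}$ --- and is trivial for every other admissible $\chi$; for composite $N$ this is the situation for all admissible $\chi$, since there $\psi$ is trivial on $\Gamma_0(N)$ whereas the admissible nontrivial $\chi$ is not. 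In the one-dimensional cases the generator has nonzero constant term at $\infty$ (being Eisenstein with a regular cusp there) and so normalises to have constant term $1$, yielding $E_k^{(\chi)}$.

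The step I expect to be the main obstacle is precisely this bookkeeping for $k=1$: tracking the Gauss-sum constants in Weisinger's transformation formulas and, because $k$ is odd and $\chi$ may be ramified at a prime dividing $N$, the fourth-root-of-unity normalisation forced by $W_m^2=\pm 1$, so that the eigenvalue systems one computes are matched correctly against the prescribed values $\chi(W_m)$ and the vanishing in the two exceptional cases is confirmed. The cases $k\ge 2$ are comparatively routine once the permutation structure of the $W_m$-action on the explicit basis has been written down.
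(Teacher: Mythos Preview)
Your approach is exactly the one the paper indicates: the paper gives no proof beyond the sentence ``Combining the above results with Weisinger's calculation of the action of the $W_m$ on Eisenstein series, we obtain then\ldots'', and your proposal carries out precisely this combination of the listed dimensions $d_k(N,\chi)$ with the $W_m$-action on the explicit basis. Your additional remark that the single cusp of $\Gamma_0(N)^+$ already forces $\dim\cE_k(N,\chi)^+\le 1$ is a helpful shortcut not made explicit in the paper, but otherwise the structure of the case analysis (generic $k\ge 3$, the degenerate $E_2$ for $k=2$, the Gauss-sum bookkeeping for $k=1$) is the intended one.
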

Similar to the case of modular forms for the full modular group, multiplication by $\Delta_N$ defines an injective linear map from $M_k(\Gamma_0(N)^+, \chi)$ to $M_{k+k_1(N)}(\Gamma_0(N)^+, \chi\psi^{k_1(N)})$ whose image is the space of cusp forms. It will therefore be useful to twist a given character $\chi$ by $\psi^r$ if one wants to connect modular forms of weight $k$ with those of weight $k+r$, in particular if $r=\ell k_1(N)$ is a multiple of $k_1(N)$. 
 
 \begin{theorem}\label{k_min_lemma}
Let $k \in \N$, $N$ from the list in the previous section, $\chi$ as above. Denote by $k' = k'(N,\chi,k)$ the smallest
non negative integer $k' \equiv  k \bmod k_1(N)$ with $M_{k'}(\Gamma_0(N)^+,\chi \psi^{k'-k}) \not= \{0\}$. Then one has
$$k'= \left\{\begin{array}{ll}
 k_1(N) & \mbox{if } k \equiv 0 \bmod k_1(N),\: \chi \not= \psi^k \\
 1+k_1(N) & \mbox{if } k \equiv 1 \bmod k_1(N),\\ 
 & \chi \psi^{-k}|_{\Gamma_0(N)\cup W_N\Gamma_0(N)} \not= {\bf 1} \\
 2+k_1(N) & \mbox{if } k \equiv 2 \bmod k_1(N),\\
 & \chi \psi^{2-k} = {\bf 1}\\ \tilde{k} \in \{0,\ldots,k_1(N)-1\} & \mbox{in all other cases}
 \end{array}\right.$$
In all cases
 $ M_{k'} (\Gamma_0(N)^+,\chi \psi^{k'-k}) = {\cE_{k'}}
 (\Gamma_0(N)^+,\chi \psi^{k'-k})={\cE_{k'}}(N,\chi)^+$ is one-dimensional.
 \end{theorem}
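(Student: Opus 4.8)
The plan is to reduce the whole statement to Lemma~\ref{eisensteinlemma} by running the multiplication-by-$\Delta_N$ recursion down to small weights. Let $\tilde k\in\{0,\dots,k_1(N)-1\}$ denote the residue of $k$ modulo $k_1(N)$, and for $m\ge 0$ put $\mu_m:=\chi\psi^{\tilde k+mk_1(N)-k}$, so that $\mu_0=\chi\psi^{\tilde k-k}$, $\mu_{m+1}=\mu_m\psi^{k_1(N)}=\mu_m\chi^{(N)}$, and $\chi\psi^{k'-k}=\mu_m$ when $k'=\tilde k+mk_1(N)$. Each $\mu_m$ is again a character which is real on $\Gamma_0(N)$, satisfies $\mu_m(-1)=(-1)^{\tilde k+mk_1(N)}$, and (for composite $N$) leaves the $W_j$ commuting, so that the spaces $\cE_{\tilde k+mk_1(N)}(N,\mu_m)^+$ of Lemma~\ref{eisensteinlemma} are defined. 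First I would record that, since $\Delta_N$ has no zero in $H$ and a simple zero at the single cusp, dividing by $\Delta_N$ identifies the space of cusp forms inside $M_w(\Gamma_0(N)^+,\mu)$ with $\Delta_N\cdot M_{w-k_1(N)}(\Gamma_0(N)^+,\mu\psi^{-k_1(N)})$; as the cusp-form/Eisenstein decomposition of $M_w(\Gamma_0(N))$ is $W_j$-equivariant, this yields
$$M_w(\Gamma_0(N)^+,\mu)=\Delta_N M_{w-k_1(N)}(\Gamma_0(N)^+,\mu\psi^{-k_1(N)})\oplus\cE_w(N,\mu)^+\qquad(w\ge 0),$$
with the convention $M_w=\{0\}$ for $w<0$.

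Iterating this relation along the progression $\tilde k<\tilde k+k_1(N)<\tilde k+2k_1(N)<\cdots$, which bottoms out at weight $\tilde k<k_1(N)$, where there are no cusp forms, gives
$$\dim M_{\tilde k+mk_1(N)}(\Gamma_0(N)^+,\mu_m)=\sum_{j=0}^{m}\dim\cE_{\tilde k+jk_1(N)}(N,\mu_j)^+ .$$
Hence $k'=\tilde k+m_0k_1(N)$, where $m_0$ is the smallest $m$ with $\cE_{\tilde k+mk_1(N)}(N,\mu_m)^+\neq\{0\}$, and for that $m_0$ the partial sum equals $1$; this at the same time gives the concluding assertion, namely that $M_{k'}(\Gamma_0(N)^+,\chi\psi^{k'-k})=\cE_{k'}(\Gamma_0(N)^+,\chi\psi^{k'-k})=\cE_{k'}(N,\chi)^+$ is one-dimensional. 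Everything thus comes down to computing $m_0$.

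By Lemma~\ref{eisensteinlemma}, together with the trivial fact that $\dim\cE_0(N,\mu)^+$ is $1$ if $\mu={\bf 1}$ and $0$ otherwise, the space $\cE_w(N,\mu)^+$ is one-dimensional except in the three degenerate configurations $(w,\mu)=(0,\ \mu\neq{\bf 1})$, $(1,\ \mu\neq\psi)$, $(2,\ \mu={\bf 1})$. Therefore $m_0=0$, i.e.\ $k'=\tilde k$, unless $(\tilde k,\mu_0)$ is degenerate; unwinding $\mu_0=\chi\psi^{\tilde k-k}$, the latter happens exactly in the three cases ``$\tilde k=0$, $\chi\neq\psi^k$'', ``$\tilde k=1$, $\chi\psi^{-k}|_{\Gamma_0(N)\cup W_N\Gamma_0(N)}\neq{\bf 1}$'' (the restriction entering because, by Weisinger's computation of the $W_j$-action on weight-one Eisenstein series, nonvanishing of $\cE_1(N,\mu_0)^+$ depends only on $\mu_0|_{\Gamma_0(N)\cup W_N\Gamma_0(N)}$), and ``$\tilde k=2$, $\chi\psi^{2-k}={\bf 1}$''. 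In a degenerate case $m_0\ge 1$, and one must climb further. If $k_1(N)\ge 3$ then already $\tilde k+k_1(N)\ge 3$, so $\cE_{\tilde k+k_1(N)}(N,\mu_1)^+$ is one-dimensional, $m_0=1$, and $k'=\tilde k+k_1(N)$ equals $k_1(N)$, $1+k_1(N)$, or $2+k_1(N)$ as stated. For the levels $N\in\{11,14,15,23\}$ with $k_1(N)\le 2$ one finishes by a finite computation using $\mu_{j+1}=\mu_j\chi^{(N)}$ and the explicit value of $\chi^{(N)}$ on the Atkin--Lehner cosets; this is where the overlaps among the hypotheses (e.g.\ ``$k\equiv 0$'' versus ``$k\equiv 2$'' when $k_1(N)=2$, or all three when $k_1(N)=1$) get sorted out, with the stronger alternative $\chi\psi^{2-k}={\bf 1}$ taking precedence and producing the value $k'=2+k_1(N)$.

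The step I expect to be the main obstacle is this character bookkeeping: determining, for each admissible $\chi$ and each residue $\tilde k$, exactly which of the twists $\mu_j=\mu_0(\chi^{(N)})^{j}$ land in a degenerate configuration in weights $1$ and $2$, which forces one to use the precise value of $\chi^{(N)}$ on the Atkin--Lehner cosets and Weisinger's weight-one formulas. The levels needing real care are the composite $N\in\{14,15\}$, where $\chi|_{\Gamma_0(N)}$ may itself be a nontrivial Legendre symbol, and $N=23$, where $k_1(N)=1$ makes one climb as many as three rungs of the $\Delta_N$-ladder before the weight reaches $3$ (this is what produces the value $2+k_1(N)=3$ there). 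That $m_0$ is finite is automatic --- it is at most the first index with $\tilde k+mk_1(N)\ge 3$ --- and for $k_1(N)\ge 3$ the whole climbing step is vacuous; the recursion and the dimension count themselves are purely formal once Lemma~\ref{eisensteinlemma} is granted.
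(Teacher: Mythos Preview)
Your argument is correct and follows essentially the same route as the paper: both proofs rest on the observation that division by $\Delta_N$ kills cusp forms, so that at the minimal weight $k'$ one has $M_{k'}=\cE_{k'}(N,\chi)^+$, and then Lemma~\ref{eisensteinlemma} determines exactly when this space is nonzero. The paper compresses this into two sentences, while you spell out the full recursion $\dim M_{\tilde k+mk_1(N)}=\sum_{j\le m}\dim\cE_{\tilde k+jk_1(N)}^+$ and the resulting case analysis (including the character bookkeeping for small $k_1(N)$ and the refinement of the weight-one condition to a restriction on $\Gamma_0(N)\cup W_N\Gamma_0(N)$), but the underlying mechanism is identical.
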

 \begin{proof}
Since the existence of a cusp form $f$ in $M_{k'}(\Gamma_0(N)^+,\chi \psi^{k'-k})$ can be ruled out by considering
$\frac{f}{\Delta_N^r}$ (where $r$ is the order of the zero of $f$ at
$\infty$) and the minimality of $k'$, this follows from the previous lemma.
 \end{proof}
 \begin{corollary}
Let $k,\chi,N$ be as above, $k' = k'(N,\chi,k)$, assume $k \geq k'$. Then $\dim M_k(\Gamma_0(N)^+,\chi) = 1+
\frac{k-k'}{k_1(N)}$ and the forms 
 $$E_{k'}^{(\chi \psi^{k'-k})}(E_{k_1(N)}^{(\psi^{k_1(N)})})^r \Delta_N^s \mbox{ with } r+s = \frac{k-k'}{k_1(N)}$$
form a basis of this space.
 \end{corollary}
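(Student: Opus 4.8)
The plan is to get everything from three facts already available: Theorem~\ref{k_min_lemma}, by which $M_{k'}(\Gamma_0(N)^+,\chi\psi^{k'-k})$ is one-dimensional and spanned by an Eisenstein series with constant term $1$; the remark just before it, that multiplication by $\Delta_N$ is an isomorphism of $M_m(\Gamma_0(N)^+,\mu)$ onto the subspace $S_{m+k_1(N)}(\Gamma_0(N)^+,\mu\psi^{k_1(N)})$ of cusp forms (valid for any character $\mu$, since $\Delta_N$ has no zero in $H$ and a simple zero at the single cusp); and the fact that the constant Fourier coefficient $f\mapsto a_0(f)$ embeds $M_m(\Gamma_0(N)^+,\mu)/S_m(\Gamma_0(N)^+,\mu)$ into $\C$, again because there is only one cusp. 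Put $d=\frac{k-k'}{k_1(N)}\in\Z_{\ge0}$.

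I would first prove the lower bound $\dim M_k(\Gamma_0(N)^+,\chi)\ge d+1$. For $r+s=d$ the form $F_{r,s}=E_{k'}^{(\chi\psi^{k'-k})}\big(E_{k_1(N)}^{(\psi^{k_1(N)})}\big)^{r}\Delta_N^{s}$ is holomorphic, of weight $k'+(r+s)k_1(N)=k$, and of character $\chi\psi^{k'-k}\psi^{(r+s)k_1(N)}=\chi\psi^{k'-k}\psi^{dk_1(N)}=\chi$, so $F_{r,s}\in M_k(\Gamma_0(N)^+,\chi)$; here one needs $E_{k_1(N)}^{(\psi^{k_1(N)})}$ to exist, i.e.\ one checks via Lemma~\ref{eisensteinlemma} that the pair $(k_1(N),\psi^{k_1(N)})$ is admissible and avoids the two exceptional cases (for composite $N$ the character $\psi^{k_1(N)}$ is trivial on $\Gamma_0(N)$, and when $k_1(N)=2$ one has $\psi^2(W_N)=(-i)^2=-1\ne1$). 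Since $E_{k'}^{(\chi\psi^{k'-k})}$ and $E_{k_1(N)}^{(\psi^{k_1(N)})}$ have constant term $1$ whereas $\Delta_N$ vanishes to order exactly $1$ at the cusp, $F_{d-s,s}$ vanishes there to order exactly $s$, for $s=0,\dots,d$; forms with pairwise distinct vanishing orders are linearly independent.

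For the matching upper bound I would iterate the $\Delta_N$-filtration
\[ M_k(\Gamma_0(N)^+,\chi)\supseteq \Delta_N M_{k-k_1(N)}(\Gamma_0(N)^+,\chi\psi^{-k_1(N)})\supseteq \Delta_N^{2} M_{k-2k_1(N)}(\Gamma_0(N)^+,\chi\psi^{-2k_1(N)})\supseteq\cdots, \]
whose $j$-th successive quotient, after dividing by $\Delta_N^{j}$, is $M_{k-jk_1(N)}(\Gamma_0(N)^+,\chi\psi^{-jk_1(N)})/S_{k-jk_1(N)}(\Gamma_0(N)^+,\chi\psi^{-jk_1(N)})$, hence of dimension $\le1$. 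Crucially, the defining condition for the minimal weight is unchanged under replacing $(\chi,k)$ by $(\chi\psi^{-jk_1(N)},k-jk_1(N))$, since $\chi\psi^{-jk_1(N)}\psi^{\,m-(k-jk_1(N))}=\chi\psi^{\,m-k}$; so $k'$ computed for the $j$-th twist still equals $k'$, and minimality forces $M_{k'-k_1(N)}(\Gamma_0(N)^+,\chi\psi^{-(d+1)k_1(N)})=\{0\}$, i.e.\ the filtration reaches $\{0\}$ after step $d$, the last nonzero term being $\Delta_N^{d}M_{k'}(\Gamma_0(N)^+,\chi\psi^{k'-k})$, which is one-dimensional by Theorem~\ref{k_min_lemma}. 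Adding up the dimensions of the successive quotients gives $\dim M_k(\Gamma_0(N)^+,\chi)\le d+1$, hence equality, and the $d+1$ independent forms $F_{r,s}$ are a basis.

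The only delicate part is character bookkeeping: one must check that each twist $\chi\psi^{-jk_1(N)}$ again satisfies the standing hypotheses --- real on $\Gamma_0(N)$, with $\chi\psi^{-jk_1(N)}(-1)=(-1)^{k-jk_1(N)}$ and commuting Atkin--Lehner operators for composite $N$ --- which holds because $\psi|_{\Gamma_0(N)}$ is trivial for composite $N$ and $\{\pm1\}$-valued in general, and because $\psi(-1)=(-1)^{k_1(N)}$ in the one relevant prime case while $k_1(N)$ is even in all the others. I expect this, together with the invariance of the minimal weight $k'$ under twisting that makes the filtration terminate at the right place, to be the only point requiring attention; all the real content is supplied by Lemma~\ref{eisensteinlemma} and Theorem~\ref{k_min_lemma}.
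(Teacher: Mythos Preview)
Your argument is correct and is essentially the paper's one-line induction on $d=\frac{k-k'}{k_1(N)}$ unrolled into an explicit $\Delta_N$-filtration together with a separate linear-independence check, with the character bookkeeping and the existence of $E_{k_1(N)}^{(\psi^{k_1(N)})}$ spelled out rather than left implicit. The only cosmetic slip is the phrase ``the one relevant prime case'' for the parity check $\psi(-1)^{k_1(N)}=(-1)^{k_1(N)}$: there are in fact two levels ($N=7,23$) with $k_1(N)$ odd, but in both one indeed has $\psi(-1)=-1$, so the verification goes through.
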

 \begin{proof}
By induction on $\frac{k-k'}{k_1(N)}$, the case $k=k'$ being given by
the previous Lemma.
 \end{proof}
 \begin{remark}
If $N$ is not restricted to our list of special levels, the dimension of the space of cusp forms for $\Gamma_0(N)^+$ with
character $\chi$ and weight $k$ can be calculated using the trace
formula from \cite{skoruppa-zagier}. If $N=1$ and $\chi$ is trivial,
the corollary gives
the familiar dimension formula for modular forms for the full modular group ${\rm SL}_2(\Z)$.
 \end{remark}
 \begin{corollary}\label{hauptmodule_cor}
Let $k$, $N$ be as before, denote by $j_N$ the hauptmodule for the modular curve $\overline{\Gamma_0(N)^+\setminus H}$ of genus
zero, normalized to $j_N = q^{-1}+c_1q+ \cdots$ (with $q = \exp(2\pi
i\tau), \tau \in H)$. Then $-\Delta_N\cdot q  \frac{dj_N}{dq}
= E_{2+k_1(N)}^{(\chi^{(N)})}$.
 \end{corollary}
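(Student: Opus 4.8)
The plan is to recognise $g:=-\Delta_N\cdot q\,\frac{dj_N}{dq}$ as an element of $M_{2+k_1(N)}(\Gamma_0(N)^+,\chi^{(N)})$ whose Fourier expansion at the cusp has constant term $1$, and then to use Theorem \ref{k_min_lemma} together with its corollary to see that this space is one-dimensional, spanned by $E_{2+k_1(N)}^{(\chi^{(N)})}$; comparing constant terms then gives the asserted identity.

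For the transformation behaviour, write $q=\exp(2\pi i\tau)$, so that $q\,\frac{dj_N}{dq}=\frac1{2\pi i}\frac{dj_N}{d\tau}$. For $\gamma={a\,b\choose c\,d}\in\Gamma_0(N)^+$ one has $\frac{d(\gamma\tau)}{d\tau}=\frac{\det\gamma}{(c\tau+d)^2}$, so differentiating the invariance $j_N(\gamma\tau)=j_N(\tau)$ yields $\frac{dj_N}{d\tau}(\gamma\tau)=\frac{(c\tau+d)^2}{\det\gamma}\,\frac{dj_N}{d\tau}(\tau)$; that is, $\frac1{2\pi i}\frac{dj_N}{d\tau}$ transforms like a modular form of weight $2$ and trivial character for $\Gamma_0(N)^+$. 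For $\gamma\in\Gamma_0(N)$ this is classical; for an Atkin--Lehner involution $W_m$ the factor $\det W_m=m$ produced by the chain rule is cancelled exactly by the determinant factor in the weight-$2$ slash operator, so no nontrivial multiplier appears. Multiplying by $\Delta_N\in M_{k_1(N)}(\Gamma_0(N)^+,\chi^{(N)})$, with $\chi^{(N)}=\psi^{k_1(N)}$, then shows that $g$ has weight $2+k_1(N)$ and character $\chi^{(N)}$.

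Next, $j_N$ is holomorphic on $H$ with its only pole at the cusp, where $j_N=q^{-1}+O(1)$; hence $q\,\frac{dj_N}{dq}$ is holomorphic on $H$ and equals $-q^{-1}+O(q)$ there (the constant term of $j_N$ contributes nothing, so its normalisation is immaterial). Since $\Delta_N$ has no zeros on $H$ and $\Delta_N=q+O(q^2)$, the product $g$ is holomorphic on $H$ and equals $1+O(q)$ at the cusp, so $g\in M_{2+k_1(N)}(\Gamma_0(N)^+,\chi^{(N)})$ with constant term $1$. To identify this space I would apply Theorem \ref{k_min_lemma} with $k=2+k_1(N)$ and $\chi=\chi^{(N)}=\psi^{k_1(N)}$: one has $\chi\psi^{2-k}={\bf 1}$, and each smaller admissible weight is ruled out because the relevant space $M_{k'}(\Gamma_0(N)^+,\chi^{(N)}\psi^{k'-k})$ reduces to $M_0$ with a nontrivial character, or to $M_2$ with trivial character, or (when $N=23$) to $M_1$ with a character different from $\psi$, all of which vanish by genus zero together with Lemma \ref{eisensteinlemma}; hence $k'=2+k_1(N)$ and the corollary gives $\dim M_{2+k_1(N)}(\Gamma_0(N)^+,\chi^{(N)})=1$ with basis $E_{2+k_1(N)}^{(\chi^{(N)})}$. (Equivalently, by the $\Delta_N$-multiplication isomorphism the cusp forms of this weight and character correspond to $M_2(\Gamma_0(N)^+,{\bf 1})=\{0\}$, so the space equals $\mathcal{E}_{2+k_1(N)}(N,\chi^{(N)})^+$, which is one-dimensional by Lemma \ref{eisensteinlemma}.) Thus $g=\lambda E_{2+k_1(N)}^{(\chi^{(N)})}$ for some $\lambda\in\C$, and comparing constant terms forces $\lambda=1$.

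The step I expect to require the most care is the transformation behaviour under the Atkin--Lehner involutions: one must check that the determinant factors produced by differentiating match the normalisation of the weight-$k$ action under which $\Delta_N$ carries the character $\chi^{(N)}$, so that $g$ has precisely this character and not some twist of it. Everything else --- holomorphy, the leading $q$-coefficients of $q\,\frac{dj_N}{dq}$ and of $\Delta_N$, and the dimension count --- is routine once the results recalled above are in place.
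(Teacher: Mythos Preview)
Your proof is correct and follows exactly the paper's approach: recognise $-\Delta_N\cdot q\,\frac{dj_N}{dq}$ as a holomorphic modular form of weight $2+k_1(N)$ and character $\chi^{(N)}$ with constant term $1$, then use Theorem~\ref{k_min_lemma} to conclude that the ambient space is one-dimensional, spanned by $E_{2+k_1(N)}^{(\chi^{(N)})}$. You have simply supplied the details (transformation under $W_m$, the $q$-expansion computation, and the two equivalent ways of seeing the dimension is one) that the paper leaves implicit.
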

 \begin{proof}
$-\Delta_N \cdot q \cdot \frac{dj_N}{dq}$ is a modular form of weight $2+k_1(N)$ and character $\chi^{(N)}=\psi^{k_1(N)}$ for
$\Gamma_0(N)^+$ with constant term $1$ at $\infty$. By the theorem, $M_{2+k_1(N)}(\Gamma_0(N)^+,\chi^{(N)})$ is one-dimensional,
spanned by $E_{2+k_1(N)}^{(\chi^{(N)})}$.
 \end{proof}
 \begin{corollary}\label{eisenstein_zeros}
 \begin{itemize}
 \item[a)] Let $N=2$. Then all zeros of the Eisenstein series
   $E_k^{(\chi)}$ of weight $k\le 4$ in the fundamental domain
$\F^{\ast}(2)$ of \cite{mns} are on the arc
 $$\overline{A_2^{\ast}} = \{z \in \C~|~|z| =
 \frac{1}{\sqrt{2}},\frac{\pi}{2} \leq \mbox{\rm arg}(z) \leq \frac{3\pi}{4}\}.$$
 \item[b)] Let $N = 3$. Then all zeros of the Eisenstein series 
of the Eisenstein series
   $E_k^{(\chi)}$ of weight $k\le 4$ in the fundamental domain
   $\F^{\ast}(3)$ of \cite{mns} are on the arc 
 \begin{equation*}
 \overline{A}_3^{\ast} = \{z \in \C~|~ |z| = \frac{1}{\sqrt{3}},\frac{\pi}{2} \leq {\rm arg}(z) \leq \frac{5\pi}{6}\}.
 \end{equation*}
 \item[c)] Let $N = 5$. Then all zeros of the Eisenstein series
   $E_k^{(\chi)}$ of weight $k\le 4$ for $\Gamma_0(5)^+$ in the fundamental
domain 
 \begin{equation*}
 \begin{array}{lll}
F^{\ast}(5) &=& \{z \in \C~|~|z| \geq \frac{1}{\sqrt{5}}, |z+\frac{1}{2}| \geq \frac{1}{2\sqrt{5}},-\frac{1}{2} \leq {\rm Re}(z) 
   \leq 0\}\\
 &\cup& \{z \in \C~|~|z| > \frac{1}{\sqrt{5}}, |z+\frac{1}{2}| > \frac{1}{2 \sqrt{5}}, 0 \leq {\rm Re}(z) < \frac{1}{2}\}
 \end{array}
 \end{equation*}
from \cite{shig} (notice that this is misprinted in \cite{shig}) are on the arc $\overline{A}_5^{\ast} =
\F^{\ast}(5) \cap \{z \in \C~|~|z| = \frac{1}{\sqrt{5}}$ or $|z+\frac{1}{2}| = \frac{1}{2\sqrt{5}}$.
 \end{itemize}
 \end{corollary}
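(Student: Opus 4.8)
The plan is to read off the number of zeros of $E_k^{(\chi)}$ from the valence formula for $\Gamma_0(N)^+$ and to observe that, for these small weights, this number is too small to permit a zero at a non--elliptic point, so that all zeros sit at elliptic points, which for $N=2,3,5$ are exactly the endpoints of $\overline{A_N^*}$ (and, for $N=5$, also the meeting point $(-2+i)/5$ of its two circular pieces). First I would note that, since $\Delta_N$ has weight $k_1(N)$ and a single simple zero, at the cusp, the total order of vanishing of a weight--$k$ form for $\Gamma_0(N)^+$ equals $k/k_1(N)$, an elliptic point of order $e$ being counted with weight $1/e$. I would then recall the (complete list of) elliptic points: $i/\sqrt2$ of order $2$ and $(-1+i)/2$ of order $4$ for $N=2$; $i/\sqrt3$ of order $2$ and $-\tfrac12+i/(2\sqrt3)$ of order $6$ for $N=3$; and $i/\sqrt5$, $(-2+i)/5$, $-\tfrac12+i/(2\sqrt5)$, all of order $2$, for $N=5$ --- all lying on $\overline{A_N^*}$, with the two ``extreme'' ones being its endpoints. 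By Lemma \ref{eisensteinlemma} the admissible $(k,\chi)$ with $k\le 4$ form a short list, and for each of them $E_k^{(\chi)}$ is the unique element of a one--dimensional $\mathcal E_k(N,\chi)^+$ with constant term $1$ at the cusp.

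For every case except $N=5,\ k=4,\ \chi={\bf 1}$ I claim the zeros are forced onto elliptic points. If $k<k_1(N)$ this is immediate: $k/k_1(N)<1$, $E_k^{(\chi)}$ does not vanish at the cusp, and a zero at a non--elliptic point of $H$ would already contribute $\ge 1$. The one remaining case with $k=k_1(N)$ and $\chi\neq{\bf 1}$, namely $N=5,k=4,\chi=\psi^2$, I would treat by an Atkin--Lehner argument: at a fixed point $z_0$ of $W_m$ the automorphy factor $m^{k/2}(cz_0+d)^{-k}$ has modulus $1$, so whenever the $W_m$--eigenvalue of $E_k^{(\chi)}$ disagrees with it one gets $E_k^{(\chi)}(z_0)=0$; this yields $E_4^{(\psi^2)}(i/\sqrt5)=0$, and whatever vanishing is left over (of total order $1$) then cannot be absorbed by a non--elliptic zero, so again all zeros are elliptic. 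The same argument incidentally pins several zeros down exactly, e.g. $E_3^{(\psi)}(i/\sqrt3)=0$ and $E_4^{(\psi^2)}(i/\sqrt3)=0$ for $N=3$; where it is vacuous (e.g. for the various $E_k^{({\bf 1})}$ with $k<k_1(N)$) one can instead expand in the classical $E_k(z),E_k(\ell z)$ and check the order--$2$ elliptic point, but this is only needed to locate the zero, not for the statement.

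The genuinely different case is $N=5$, $k=4$, $\chi={\bf 1}$, where $k=k_1(5)$, the total vanishing equals $1$, and a non--elliptic simple zero is possible (and occurs). Here the corollary to Theorem \ref{k_min_lemma} gives $M_4(\Gamma_0(5)^+,{\bf 1})=\langle E_4^{({\bf 1})},\Delta_5\rangle$, and matching the first two Fourier coefficients gives $E_4^{({\bf 1})}=\bigl(j_5+\tfrac{172}{13}\bigr)\Delta_5$; since $\Delta_5$ has no zeros on $H$ and $j_5$ is a biholomorphism of $\overline{\Gamma_0(5)^+\setminus H}$ onto $\mathbb P^1$, the unique zero lies over $j_5=-\tfrac{172}{13}$, and it suffices to show $-\tfrac{172}{13}\in j_5(\overline{A_5^*})$. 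For this I would use the eta--quotient hauptmodule $t=(\eta(z)/\eta(5z))^6$ of $\Gamma_0(5)$, which satisfies $t(W_5z)=125/t(z)$ and $j_5=t+125/t+6$: on $\overline{A_5^*}$ the symmetry $z\mapsto-\bar z$ combined with this transformation of $t$ forces $|t|=5\sqrt5$, so $j_5=2\operatorname{Re}(t)+6$ is real on the arc; its values at the two $W_5$--type elliptic endpoints are $6+10\sqrt5$ (at $i/\sqrt5$, where $t>0$) and, by injectivity of $j_5$, $6-10\sqrt5$, so $j_5$ sweeps exactly $[\,6-10\sqrt5,\,6+10\sqrt5\,]$. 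Since $10\sqrt5\ge 250/13$ (equivalently $845\ge 625$) one has $-\tfrac{172}{13}\ge 6-10\sqrt5$, so $-\tfrac{172}{13}$ lies in this interval and the zero is on $\overline{A_5^*}$.

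I expect the main work to be bookkeeping: correctly placing the elliptic points of each $\Gamma_0(N)^+$ on $\overline{A_N^*}$, checking case by case that the valence leaves no room for a stray non--elliptic zero, and carrying out the hauptmodule computation in the one case where it does. An alternative, closer to Rankin--Swinnerton-Dyer and to \cite{shig,mns}, would be to restrict $e^{ik\theta/2}E_k^{(\chi)}$ to $\overline{A_N^*}$ (where it is real up to a unit constant) and count sign changes; this behaves well near the end at $i/\sqrt N$ but the crude error estimate is too weak away from that end, so it would need a sharper, possibly partly numerical, bound there.
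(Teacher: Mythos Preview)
Your argument is correct but takes a genuinely different route from the paper's. The paper never invokes the valence formula; instead it observes that, by one--dimensionality of the target space, each Eisenstein series with nontrivial $\chi$ and weight $\le 4$ divides an Eisenstein series with trivial character: for $N=2$ one has $E_2^{(\psi^2)}E_4^{(\psi^2)}=E_6^{({\bf 1})}$; for $N=3$ one has $(E_1^{(\psi)})^4=(E_2^{(\psi^2)})^2=E_4^{({\bf 1})}$ and $E_3^{(\psi)}E_3^{(\psi^3)}=E_2^{(\psi^2)}E_4^{(\psi^2)}=E_6^{({\bf 1})}$; for $N=5$ each weight--$2$ series times a suitable weight--$4$ series equals $E_6^{({\bf 1})}$. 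The zero sets with nontrivial $\chi$ are therefore contained in those of $E_4^{({\bf 1})}$ or $E_6^{({\bf 1})}$, and for these the paper simply cites the results of \cite{mns,shig}. In particular the paper does \emph{not} reprove the case $N=5$, $k=4$, $\chi={\bf 1}$; it is quoted from \cite{shig}.

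Your approach, by contrast, is self--contained: the valence count $k/k_1(N)$ together with the location of the elliptic points on $\overline{A_N^*}$ handles every case with $k<k_1(N)$ in one stroke (including the trivial--character cases the paper outsources), the Atkin--Lehner fixed--point argument disposes of $N=5$, $k=4$, $\chi=\psi^2$, and only $N=5$, $k=4$, $\chi={\bf 1}$ requires the hauptmodule computation. The trade--off is that the paper's proof is two lines per level once the cited results are granted, whereas yours must verify the elliptic data and carry out the explicit $j_5$ argument. One point to tighten in your sketch: the identity $|t|=5\sqrt{5}$ via $z\mapsto -\bar z$ and $t\mapsto 125/t$ only covers the piece $|z|=1/\sqrt{5}$ of $\overline{A_5^*}$; for the second piece $|z+\tfrac12|=\tfrac{1}{2\sqrt5}$ you need the analogous involution $z\mapsto -\bar z-1$ (or equivalently pass to $T^{-1}W_5$) before you can conclude that $j_5$ sweeps all of $[\,6-10\sqrt5,\,6+10\sqrt5\,]$ on the full arc.
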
 
 \begin{proof} The assertions for $\chi={\bf1}$ have been proven in
   \cite{mns, shig}, so we have to reduce our assertions for
   nontrivial $\chi$ to the cases treated there.
 \begin{itemize}
 \item[a)] Since $E_2^{(\psi^2)} E_4^{(\psi^2)} = E_6^{({\bf 1})}$ by the one-dimensionality of  
$M_6(\Gamma_0(2)^+,{\bf 1})$, this follows from the result of \cite{mns} for the zeros of $E_6^{({\bf 1})}$.
 \item[b)] Again, by dimension reasons we have 
 \begin{equation*}
 \begin{array}{l}
 (E_1^{(\psi)})^4 = E_4^{({\bf 1})} = (E_2^{(\psi^2)})^2,\\
 E_3^{(\psi)} E_3^{(\psi^3)} = E_6^{({\bf 1})} = E_2^{(\psi^2)} E_4^{(\psi^2)},
 \end{array}
 \end{equation*}
and the assertion follows from the results of \cite{mns} for $E_4^{({\bf 1})}, E_6^{({\bf 1})}$.
 \item[c)] Again, by dimension reasons we can multiply any of the Eisenstein series of weight 2 with a 
suitable Eisenstein series of weight 4 to obtain $E_6^{({\bf 1})}$, and the assertion follows from the result
of \cite{shig} [Prop. 2.3] for $E_6^{({\bf 1})}$ (for a correction see  {\tt  arXiv:math/0607409v3}).
 \end{itemize}
 \end{proof}
 \begin{remark}
At present we see no way to obtain similar results for non trivial
$\chi$ and higher weights. Also, it seems to be difficult to treat the
other levels in our list.   
 \end{remark}
 \section{Weakly holomorphic modular forms}
As in \cite{duke-jenkins}, we call a holomorphic function on $\mathbb H$ which is meromorphic in the cusp of 
$\Gamma_0(N)^+\setminus \mathbb H$ and transforms like a modular form of weight $k$ and character $\chi$ under
$\Gamma_0(N)^+$ a weakly holomorphic modular form for $\Gamma_0(N)^+$, and denote by $\mathcal M_k(\Gamma_0(N)^+,\chi)$ 
the space of such weakly holomorphic modular forms.
 \begin{lemma}
Let $N,k,\chi \in \Z$ be as in the previous section and write $k = k'+\ell \cdot k_1(N)$ with
$k' = k'(N,\chi,k)$ as in Theorem \ref{k_min_lemma}.  Then for each integer $m \geq -\ell$, there exists a unique 
$f_{k,m}^{(\chi)} \in \mathcal M_k(\Gamma_0(N)^+,\chi)$ with $q$-expansion of the form
 \begin{equation*}
 f_{k,m}^{(\chi)} (\tau) = q^{-m}+O(q^{\ell+1}),
 \end{equation*}
and $f_{k,m}^{(\chi)}$ can be written as 
 \begin{equation*}
f_{k,m}^{(\chi)} = \Delta_N^{\ell} E_{k'}^{(\chi \psi^{k'-k})} \cdot F_{k,\chi,D,N}(j_N)
 \end{equation*}
where $F_{k,\chi,D,N}$ is a monic polynomial of degree $D = \ell+m$ with integer coefficients.\\
The $f_{k,m}^{(\chi)}$ with $m \geq -\ell$ form a basis of $\mathcal
M_k(\Gamma_0(N)^+,\chi)$. The coefficients $a_k^{(\chi)} (m,n)q^n$ in 
$f_{k,m}^{(\chi)}(\tau) = q^{-m}+\sum_{n=\ell+1}^{\infty} a_k^{(\chi)}
(m,n)q^n$ are integers.\\
Following \cite{duke-jenkins}, we write $f_k^{(\chi)}: = f_{k,-\ell}^{(\chi)} = E_{k'}^{(\chi
  \psi^{k'-k})}\,\Delta_N^{\ell}$ in the sequel.
 \end{lemma}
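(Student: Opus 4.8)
The plan is to transplant the argument of Duke and Jenkins, taking as the only non-elementary input the Corollary above, which gives $\dim M_k(\Gamma_0(N)^+,\chi)=1+(k-k')/k_1(N)=\ell+1$ together with the explicit basis $E_{k'}^{(\chi\psi^{k'-k})}(E_{k_1(N)}^{(\psi^{k_1(N)})})^r\Delta_N^s$, $r+s=\ell$, of that space. The first thing to record is a vanishing statement: a holomorphic $g\in M_k(\Gamma_0(N)^+,\chi)$ with $\ord_\infty(g)\ge\ell+1$ is zero. This is immediate from the quoted basis, because the Eisenstein factors have constant term $1$ while $\Delta_N$ has a simple zero at $\infty$, so the $\ell+1$ basis elements have pairwise distinct orders $0,1,\dots,\ell$ at the cusp; hence a nonzero element of $M_k(\Gamma_0(N)^+,\chi)$ has order at most $\ell$ there.

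Next I would construct the $f_{k,m}^{(\chi)}$. Set $f_k^{(\chi)}=E_{k'}^{(\chi\psi^{k'-k})}\Delta_N^\ell$; by the Corollary this lies in $M_k(\Gamma_0(N)^+,\chi)$, and since $E_{k'}^{(\chi\psi^{k'-k})}=1+O(q)$ and $\Delta_N=q+O(q^2)$ its expansion is $q^\ell+O(q^{\ell+1})$, so it is the desired $f_{k,-\ell}^{(\chi)}$. For $m\ge-\ell$ put $D=\ell+m$ and seek a monic $F(X)=X^D+a_{D-1}X^{D-1}+\dots+a_0$ such that $f_{k,m}^{(\chi)}:=f_k^{(\chi)}F(j_N)$ has expansion $q^{-m}+O(q^{\ell+1})$. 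Because $j_N=q^{-1}+O(q)$ has no constant term (normalization of Corollary~\ref{hauptmodule_cor}), the form $f_k^{(\chi)}j_N^{\,i}$ has lowest term $q^{\ell-i}$ with coefficient $1$; consequently $f_k^{(\chi)}F(j_N)$ automatically has $q$-expansion $q^{-m}+(\text{terms of exponent}\ge-m+1)$, and demanding that the coefficients of $q^{-m+1},\dots,q^{\ell}$ vanish is a system of $D$ linear equations in $a_{D-1},\dots,a_0$ which is lower triangular with $1$'s on the diagonal, hence uniquely solvable. The resulting $f_{k,m}^{(\chi)}=f_k^{(\chi)}F(j_N)$ is then weakly holomorphic (a holomorphic form times a power of the modular function $j_N$, which has poles only at the cusp) and has the asserted shape $\Delta_N^{\ell}E_{k'}^{(\chi\psi^{k'-k})}F_{k,\chi,D,N}(j_N)$.

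Uniqueness and the basis property follow formally from the vanishing statement. If two weakly holomorphic forms are both $q^{-m}+O(q^{\ell+1})$, their difference is holomorphic on $H$ and at $\infty$, hence lies in $M_k(\Gamma_0(N)^+,\chi)$ with order $\ge\ell+1$ at the cusp, hence is zero; this also pins down $F$ inside $\C[j_N]$. For spanning, a nonzero $g\in\mathcal{M}_k(\Gamma_0(N)^+,\chi)$ has, by the vanishing statement, order $-M$ at $\infty$ with $M\ge-\ell$; subtracting $\sum_{m=-\ell}^{M}c_m f_{k,m}^{(\chi)}$ where $c_m$ is the $q^{-m}$-coefficient of $g$ leaves a form all of whose $q^{e}$-coefficients with $-M\le e\le\ell$ vanish, hence a holomorphic form of order $\ge\ell+1$, hence zero; linear independence of the $f_{k,m}^{(\chi)}$ is clear from their distinct leading exponents.

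Finally, integrality. Here I would show the three building blocks have integral $q$-expansions: $\Delta_N=q\prod_{m\mid N}\prod_{n\ge1}(1-q^{mn})^{24/\sigma_1(N)}$ with $24/\sigma_1(N)\in\N$ for the $N$ in our list, so $\Delta_N\in q+q^2\Z[[q]]$; $j_N\in q^{-1}\Z[[q]]$ (the hauptmoduls for these levels are those of monstrous moonshine and have integral coefficients); and each Eisenstein series $E_{k'}^{(\chi\psi^{k'-k})}$ occurring here lies in $1+q\Z[[q]]$, which one reads off from the explicit generators of $\cE_{k'}(N,\chi)^{+}$. Granting this, $f_k^{(\chi)}$ and all $f_k^{(\chi)}j_N^{\,i}$ have integer coefficients, so the triangular system above is solved over $\Z$; thus $F_{k,\chi,D,N}\in\Z[X]$ and $f_{k,m}^{(\chi)}\in\Z((q))$, i.e.\ the $a_k^{(\chi)}(m,n)$ are integers. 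I expect this last point to be the only genuine obstacle: the construction and the uniqueness and basis arguments are routine once the Corollary is in hand, whereas the integrality of the Eisenstein series $E_{k'}^{(\chi\psi^{k'-k})}$ cannot be seen by a soft argument and must be extracted from their description — this is where the restriction to the special levels $N$ really enters.
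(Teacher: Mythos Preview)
Your proposal is correct and is exactly the approach the paper has in mind: the paper's proof consists of the single sentence ``This follows from the results in the previous section in the same way as in \cite{duke-jenkins} in the case of level 1,'' and what you have written is a faithful unpacking of that sentence, using the dimension/basis Corollary to get the vanishing criterion, the triangular construction with $f_k^{(\chi)}j_N^{\,i}$, and the integrality of the three building blocks. Your honest flagging of the integrality of $E_{k'}^{(\chi\psi^{k'-k})}$ as the one point needing a case check is appropriate; the paper does not spell this out either.
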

 \begin{proof}
This follows from the results in the previous section in the same way as in \cite{duke-jenkins} in the case of level 1.
 \end{proof}
 \begin{lemma}\label{hauptmodule_lemma_2}
Let $k,N,\chi$ be as before with  $k=k'(N,\chi,k)$, denote by $\tilde{\chi}$ the character on $\Gamma_0(N)^+$ 
with $\chi \tilde{\chi} = \chi^{(N)}=\psi^{k_1(N)}$. Then one has
 \begin{equation*}
E_k^{(\chi)} E_{2+k_1(N)-k}^{(\tilde{\chi})} = E_{2+k_1(N)}^{(\chi^{(N)})} = -q \cdot \frac{dj_N}{dq} \Delta_N.
 \end{equation*}
 \end{lemma}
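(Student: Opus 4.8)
The plan is to identify the product on the left as an element of a one–dimensional space of holomorphic modular forms and then to pin it down by its constant term at the cusp.

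Set $g:=E_k^{(\chi)}\,E_{2+k_1(N)-k}^{(\tilde\chi)}$. Since $\tilde\chi=\psi^{k_1(N)}\chi^{-1}$ again satisfies the standing hypotheses (it is real on $\Gamma_0(N)$, the $W_m$ still commute, and $\tilde\chi(-1)=(-1)^{2+k_1(N)-k}$), both factors are holomorphic modular forms for $\Gamma_0(N)^+$, so $g$ is one of weight $k+(2+k_1(N)-k)=2+k_1(N)$ and character $\chi\tilde\chi=\chi^{(N)}$. For the second factor to be defined we need $2+k_1(N)-k\ge 0$ — clear, since the minimal weights enumerated in Theorem~\ref{k_min_lemma} never exceed $2+k_1(N)$ — together with one–dimensionality of $\cE_{2+k_1(N)-k}(N,\tilde\chi)^+$. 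This follows once one knows that the map $(k,\chi)\mapsto(2+k_1(N)-k,\psi^{k_1(N)}\chi^{-1})$ carries the set of ``minimal weight'' pairs of Theorem~\ref{k_min_lemma} to itself: then $2+k_1(N)-k=k'(N,\tilde\chi,2+k_1(N)-k)$, and the final statement of Theorem~\ref{k_min_lemma} gives the required one–dimensionality. \emph{Verifying this involution is the one step that needs genuine bookkeeping}: going through the four cases of the theorem, one uses $\chi\tilde\chi=\psi^{k_1(N)}$ to see that the residue classes $0$ and $2$ modulo $k_1(N)$ are interchanged while $1$ is fixed, and that the side condition attached to $(k,\chi)$ in each case — after restriction to $\Gamma_0(N)\cup W_N\Gamma_0(N)$ where relevant — is equivalent to the side condition attached to the mirror pair $(2+k_1(N)-k,\tilde\chi)$; in particular the exceptional zero–dimensional cases of Lemma~\ref{eisensteinlemma} in weights $1$ and $2$ are avoided precisely because $(k,\chi)$ was assumed to be itself a minimal weight pair. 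For the small values $k_1(N)\in\{1,2\}$ (levels $N\in\{11,14,15,23\}$) the number of pairs is small enough that one may simply inspect them directly.

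Granting this, recall from the proof of Corollary~\ref{hauptmodule_cor} that $M_{2+k_1(N)}(\Gamma_0(N)^+,\chi^{(N)})$ is one–dimensional, spanned by $E_{2+k_1(N)}^{(\chi^{(N)})}=-q\,\frac{dj_N}{dq}\,\Delta_N$. Hence $g=c\cdot E_{2+k_1(N)}^{(\chi^{(N)})}$ for some $c\in\C$. The three series $E_k^{(\chi)}$, $E_{2+k_1(N)-k}^{(\tilde\chi)}$ and $E_{2+k_1(N)}^{(\chi^{(N)})}$ are all normalized to have constant Fourier coefficient $1$ at $\infty$, so the constant term of $g$ equals $1\cdot 1=1$ and therefore $c=1$. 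This gives $E_k^{(\chi)}\,E_{2+k_1(N)-k}^{(\tilde\chi)}=E_{2+k_1(N)}^{(\chi^{(N)})}$, and the remaining equality $E_{2+k_1(N)}^{(\chi^{(N)})}=-q\,\frac{dj_N}{dq}\,\Delta_N$ is exactly Corollary~\ref{hauptmodule_cor}. Apart from the involution check, this is the same dimension-count-plus-constant-term argument that underlies the level one results of \cite{duke-jenkins}.
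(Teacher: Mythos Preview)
Your proof is correct and follows essentially the same route as the paper: verify (by case-checking Theorem~\ref{k_min_lemma}) that $2+k_1(N)-k$ is again a minimal weight for $\tilde\chi$, so that the second Eisenstein factor is defined, then use one--dimensionality of $M_{2+k_1(N)}(\Gamma_0(N)^+,\chi^{(N)})$ and Corollary~\ref{hauptmodule_cor}. You are simply more explicit than the paper about the constant-term matching that forces the proportionality constant to be $1$.
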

 \begin{proof}
Checking the cases in Theorem \ref{k_min_lemma} we see that 
$k'(N,\tilde{\chi},2+k_1(N)-k)) = 2+k_1(N)-k$, so that $M_{2+k_1(N)-k}
\not= \{0\}$, i.e., $E_{2+k_1(N)-k}^{(\tilde{\chi})}$ is well defined.
 \vspace{0.2cm}\\
Since $M_{2+k_1(N)} (\Gamma_0(N)^+,\chi^{(N)})$ is one-dimensional, generated by $E_{2+k_1(N)}^{(\chi^{(N)})}$,
the assertion follows (use Corollary \ref{hauptmodule_cor} for the second equality).
 \end{proof}
 \begin{lemma}
With notation as before (in particular $q = \exp(2\pi i \tau)$) we have
\begin{eqnarray*}
 f_{k,m}^{(\chi)}(z) &=& \frac{1}{2\pi i} \int_{C} \frac{\Delta_N^{\ell}(z) E_{k'}^{(\chi \psi^{-\ell k_1(N)})}(z) 
    E_{2+k_1(N)-k'}^{(\chi^{-1}\psi^{(1+\ell)k_1(N)})}(\tau) q^{-m-1} dq}{\Delta_N^{1+\ell}(\tau) (j(\tau)-j(z))}\\
 &=& \frac{1}{2\pi i} \int_{C} \frac{f_k^{(\chi)}(z)
   f_{2-k}^{(\chi^{-1})}(\tau)}{j(\tau)-j(z)} q^{-m-1} dq\\
&=&\frac{1}{2\pi i}
 \int_{C} \frac{f_k^{(\chi)}(z)
   f_{2-k}^{(\chi\psi^{-2k})}(\tau)}{j(\tau)-j(z)} q^{-m-1} dq, 
 \end{eqnarray*}
where $C$ is a suitable (i.e. sufficiently small) circle around zero in the $q$-plane, oriented 
counterclockwise.
 \end{lemma}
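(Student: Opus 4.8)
The plan is to deduce all three displayed identities from one generating-function identity together with Cauchy's theorem. First note that the three integrands coincide as meromorphic functions. By the lemma that introduced the $f_{k,m}^{(\chi)}$ we have $f_k^{(\chi)}=\Delta_N^{\ell}E_{k'}^{(\chi\psi^{-\ell k_1(N)})}$ (recall $k'-k=-\ell k_1(N)$); the same lemma in weight $2-k$, with the minimal weight and character identified via Lemma~\ref{hauptmodule_lemma_2}, gives $f_{2-k}^{(\chi^{-1})}=\Delta_N^{-1-\ell}E_{2+k_1(N)-k'}^{(\chi^{-1}\psi^{(1+\ell)k_1(N)})}$; and $\chi^{-1}$ and $\chi\psi^{-2k}$ are the same character of $\Gamma_0(N)^{+}$ (they agree on $\Gamma_0(N)$ since $\chi$ is real there, and on the Atkin--Lehner generators by the explicit values of $\psi$ together with $\chi(-1)=(-1)^k$). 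So it suffices to prove
\[
f_{k,m}^{(\chi)}(z)=\frac{1}{2\pi i}\int_{C}\frac{f_k^{(\chi)}(z)\,f_{2-k}^{(\chi^{-1})}(\tau)}{j_N(\tau)-j_N(z)}\,q_\tau^{-m-1}\,dq_\tau,
\]
where $q_\tau=\exp(2\pi i\tau)$ and $C$ is a small circle about $0$; equivalently, writing $q_z=\exp(2\pi i z)$ and taking $z$ in a fundamental domain, that the Laurent expansion of $G(z,\tau):=\dfrac{f_k^{(\chi)}(z)f_{2-k}^{(\chi^{-1})}(\tau)}{j_N(\tau)-j_N(z)}$ in $q_\tau$ (valid for $|q_\tau|<|q_z|$, where $\tau$ avoids the $\Gamma_0(N)^{+}$-orbit of $z$) equals $\sum_{m\ge-\ell}f_{k,m}^{(\chi)}(z)q_\tau^{m}$.

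I would prove this last statement by a double expansion in the region $|q_\tau|<|q_z|$, both variables near $0$. Since $j_N=q^{-1}+c_1q+\cdots$ (Corollary~\ref{hauptmodule_cor}), $j_N(\tau)-j_N(z)=\dfrac{q_z-q_\tau}{q_\tau q_z}U(q_\tau,q_z)$ with $U$ holomorphic near $(0,0)$ and $U(0,0)=1$, while $f_k^{(\chi)}(z)=q_z^{\ell}A(q_z)$ and $f_{2-k}^{(\chi^{-1})}(\tau)=q_\tau^{-1-\ell}B(q_\tau)$ with $A,B$ holomorphic near $0$ and $A(0)=B(0)=1$. Set $W(q_\tau,q_z):=A(q_z)B(q_\tau)/U(q_\tau,q_z)=\sum_{a,b\ge0}w_{a,b}q_\tau^{a}q_z^{b}$, holomorphic near $(0,0)$ with $w_{0,0}=1$. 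Expanding $\tfrac1{q_z-q_\tau}=\sum_{j\ge0}q_\tau^{j}q_z^{-j-1}$ gives $G(z,\tau)=q_\tau^{-\ell}W(q_\tau,q_z)\sum_{j\ge0}q_\tau^{j}q_z^{\ell-j}$, hence
\[
[q_\tau^{m}q_z^{n}]\,G=\sum_{j=\max(0,\,\ell-n)}^{\ell+m}w_{\,m+\ell-j,\;n-\ell+j}.
\]
On the other hand, expanding only $\tfrac1{j_N(\tau)-j_N(z)}=\sum_{i\ge0}j_N(z)^{i}j_N(\tau)^{-i-1}$ shows $g_m(z):=[q_\tau^{m}]\,G(z,\tau)$ equals $f_k^{(\chi)}(z)P_m(j_N(z))$ with $P_m$ monic of degree $\ell+m$, so $g_m$ is a weakly holomorphic modular form of weight $k$ and character $\chi$. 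By the defining lemma it equals $f_{k,m}^{(\chi)}$ once we know $g_m=q_z^{-m}+O(q_z^{\ell+1})$, i.e.\ $[q_\tau^{m}q_z^{n}]\,G=\delta_{n,-m}$ for all $n\le\ell$. In the displayed sum the cases $n<-m$ (empty index range, so $0$) and $n=-m$ (only $j=\ell+m$ survives, giving $w_{0,0}=1$) are immediate; for $-m<n\le\ell$ the change of index $j'=j-(\ell-n)$ rewrites the sum as $\sum_{j'=0}^{m+n}w_{\,m+n-j',\,j'}=[t^{\,m+n}]\,W(t,t)$, and since $m+n\ge1$ this vanishes precisely when $W(t,t)$ is constant.

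Thus everything reduces to the identity $W(t,t)\equiv1$; this is where the special geometry of the levels under consideration enters, and I expect it to be the main point. Putting $q_\tau=q_z=t$ (i.e.\ $\tau=z$) we get $A(t)B(t)=q_z\,f_k^{(\chi)}(z)f_{2-k}^{(\chi^{-1})}(z)$ and $U(t,t)=\lim_{\tau\to z}\dfrac{q_\tau q_z(j_N(\tau)-j_N(z))}{q_z-q_\tau}=-q_z^{2}\dfrac{dj_N}{dq}$, so $W(t,t)=\dfrac{f_k^{(\chi)}(z)f_{2-k}^{(\chi^{-1})}(z)}{-q_z\,dj_N/dq}$. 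Now $f_k^{(\chi)}f_{2-k}^{(\chi^{-1})}=E_{k'}^{(\chi\psi^{-\ell k_1(N)})}E_{2+k_1(N)-k'}^{(\chi^{-1}\psi^{(1+\ell)k_1(N)})}\Delta_N^{-1}$, and applying Lemma~\ref{hauptmodule_lemma_2} in weight $k'$ (whose dual weight and character are exactly $2+k_1(N)-k'$ and $\chi^{-1}\psi^{(1+\ell)k_1(N)}$) and then Corollary~\ref{hauptmodule_cor} gives $E_{k'}^{(\chi\psi^{-\ell k_1(N)})}E_{2+k_1(N)-k'}^{(\chi^{-1}\psi^{(1+\ell)k_1(N)})}=E_{2+k_1(N)}^{(\chi^{(N)})}=-q\,\dfrac{dj_N}{dq}\,\Delta_N$; hence $f_k^{(\chi)}f_{2-k}^{(\chi^{-1})}=-q\,dj_N/dq$ and $W(t,t)\equiv1$. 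The three contour formulas then follow by Cauchy's theorem, integrality of the $a_k^{(\chi)}(m,n)$ being already part of the defining lemma (and also visible from the integral double $q$-expansion of $G$). The case $\ell<0$ is handled by the identical computation, and all contour deformations take place in the annulus $0<|q_\tau|<|q_z|$, where $G(z,\cdot)$ is holomorphic apart from the pole of order $\ell$ at $q_\tau=0$.
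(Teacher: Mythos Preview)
Your argument is correct, but it takes the opposite route from the paper. The paper (following Duke--Jenkins) starts from the polynomial representation $f_{k,m}^{(\chi)}=\Delta_N^{\ell}E_{k'}^{(\chi\psi^{k'-k})}F_{k,\chi,D,N}(j_N)$, applies Cauchy's formula to $F_{k,\chi,D,N}$ in the $j_N$-plane, and then changes variables from $j_N$ to $q$ using the relation $E_{k'}E_{2+k_1(N)-k'}=-q\,\tfrac{dj_N}{dq}\,\Delta_N$ of Lemma~\ref{hauptmodule_lemma_2}; the generating-function identity (the paper's next theorem) is only derived afterwards from the integral formula. You instead prove the generating-function identity first, via an explicit double $q$-expansion that reduces to the diagonal condition $W(t,t)\equiv 1$, and then read off the contour integral by Cauchy. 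Your reduction $W(t,t)=\dfrac{f_k^{(\chi)}f_{2-k}^{(\chi^{-1})}}{-q\,dj_N/dq}\equiv 1$ is exactly the same use of Lemma~\ref{hauptmodule_lemma_2} and Corollary~\ref{hauptmodule_cor}, so the substantive input is identical; the gain of your approach is that it establishes the lemma and the subsequent generating-function theorem simultaneously and makes the role of the identity $f_k^{(\chi)}f_{2-k}^{(\chi^{-1})}=-q\,dj_N/dq$ completely transparent, while the paper's argument stays closer to Duke--Jenkins and avoids the bookkeeping of the two-variable power series.
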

 \begin{proof}
The first equality is proved in the same way as Lemma 2 in
\cite{duke-jenkins}:
We
write \begin{equation*}F_{k,\chi,D,N}(j_N(\tau))=\frac{f_{k,m}^{(\chi)}(\tau)}{\Delta_N^{\ell}(\tau)
    E_{k'}^{(\chi \psi^{k'-k})}(\tau)},\end{equation*}
use $ f_{k,m}^{(\chi)} (\tau) = q^{-m}+O(q^{\ell+1})$ and the Cauchy
integral formula to obtain an expression 
\begin{equation*}
  F_{k,\chi,D,N}(\zeta)=\int_{C'}\frac{q^{-m}}{\Delta_N^{\ell}(j_N)E_{k'}^{(\chi \psi^{k'-k})}(j_N)(j_N-\zeta)}dj_N
\end{equation*} of $ F_{k,\chi,D,N}(\zeta)$ as an integral around a
suitable counterclockwise circle around $0$ in the $j_N$-plane, and
replace the equality  
$-q \frac{dj}{dq} \Delta = E_{14}=E_{k'}E_{14-k'}$ used in
\cite{duke-jenkins} with the equality 
 \begin{equation*}
E_{k'}^{(\chi\psi^{-\ell k_1(N)})}(\tau)
E_{2+k_1(N)-k'}^{(\widetilde{\chi\psi^{-\ell k_1(N)}})}(\tau) = E_{2+k_1(N)}^{(\chi^{(N)})}(\tau) = -q \cdot \frac{dj_N}{dq} \Delta_N(\tau)
 \end{equation*}
(where $\widetilde{\chi\psi^{-\ell k_1(N)}}=\chi^{-1}\psi^{(1+\ell)k_1(N)}$)
from Lemma \ref{hauptmodule_lemma_2} to perform a change of variables
from the variable $j_N$ to the variable $q$.
 \vspace{0.3cm}\\
The second equality then follows (again as in \cite{duke-jenkins}) from the equalities
 \begin{eqnarray*}
 f_k^{(\chi)}(z) &=& \Delta_N^{\ell}(z) E_{k'}^{(\chi \psi^{-\ell k_1(N)})}(z),\\
 f_{2-k}^{(\chi^{-1})} (\tau) &=& \Delta_N^{-1-\ell}(\tau) 
   E_{2+k_1(N)-k'}^{(\chi^{-1}\psi^{(1+\ell)k_1(N)})}(\tau),
 \end{eqnarray*}
where we have written $\widetilde{\chi \psi^{-\ell K_1(N)}}$ as
$\chi^{-1}\psi^{(1+\ell)k_1(N)}$ and used that one has
$2-k = 2+k_1-k'(N,\chi,k)-(1+\ell)k_1 =
k'(N,\chi,2-k)-(1+\ell)k_1$. Notice also that one has
$\chi^{-1}=\chi\psi^{-2k}$ in all cases.
 \end{proof}
 \begin{remark}
The lemma above is used in \cite{jenkins-rouse} in order to prove estimates for the weights in which
the extremal modular form for ${\rm SL}_2(\Z)$ has only non-negative coefficients. We plan to
investigate this question for the extremal modular form of level $N$ and character $\chi$ as studied above;
in fact this was the motivation for the study of this situation in the present article.
 \end{remark}
 \begin{theorem}
With notations as before we have
 \begin{equation*}
 \sum_{m \geq -\ell} f_{k,m}^{(\chi)}(z) q^m = \frac{f_k^{(\chi)}(z)f_{2-k}^{(\chi \psi^{-2k})}}
   {j(\tau)-j(z)}.
 \end{equation*}
 \end{theorem}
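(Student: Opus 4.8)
The plan is to read off the identity from the third integral representation of the preceding lemma by summing over $m$. So fix $z\in H$, which, after replacing $z$ by a $\Gamma_0(N)^+$-equivalent point, we may take to have maximal imaginary part in its orbit, and put
\[
G_z(\tau)=\frac{f_k^{(\chi)}(z)\,f_{2-k}^{(\chi\psi^{-2k})}(\tau)}{j(\tau)-j(z)},
\]
recalling that $f_{2-k}^{(\chi\psi^{-2k})}=f_{2-k}^{(\chi^{-1})}$ because $\chi^{-1}=\chi\psi^{-2k}$ in all cases.

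The first step is to describe $G_z$ as a meromorphic function of $\tau$, equivalently of $q=e^{2\pi i\tau}$. By the preceding lemma $f_{2-k}^{(\chi\psi^{-2k})}=\Delta_N^{-1-\ell}\,E_{2+k_1(N)-k'}^{(\chi^{-1}\psi^{(1+\ell)k_1(N)})}$ is weakly holomorphic and, since $\Delta_N=q+O(q^2)$ has a simple zero and the Eisenstein factor equals $1+O(q)$ at the cusp, its $q$-expansion begins with $q^{-(1+\ell)}$. As $j(\tau)=q^{-1}+O(q)$, the function $j(\tau)-j(z)$ is a unit times $q^{-1}$ near the cusp and vanishes on $H$ only along the orbit $\Gamma_0(N)^+z$, which by the normalization of $z$ stays bounded away from the cusp. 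Consequently $G_z$ is holomorphic in some punctured disc $0<|q|<\rho$ and its Laurent expansion there, $G_z(\tau)=\sum_{m\ge-\ell}c_m(z)\,q^m$, begins at $q^{-\ell}$.

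The second step is to identify the $c_m(z)$. The third formula of the preceding lemma gives $f_{k,m}^{(\chi)}(z)=\frac{1}{2\pi i}\int_C G_z(\tau)\,q^{-m-1}\,dq$ for $C$ any sufficiently small circle around $0$ in the $q$-plane; choosing $C$ inside $0<|q|<\rho$ and integrating the uniformly convergent Laurent series term by term yields $f_{k,m}^{(\chi)}(z)=c_m(z)$ for all $m\ge-\ell$. Multiplying by $q^m$ and summing over $m\ge-\ell$ then gives
\[
\sum_{m\ge-\ell}f_{k,m}^{(\chi)}(z)\,q^m=\sum_{m\ge-\ell}c_m(z)\,q^m=G_z(\tau)
\]
on $0<|q|<\rho$; since both sides are meromorphic in $\tau$, the identity holds for all $\tau$, which is the assertion. (Equivalently, one may substitute the integral formula into the sum, evaluate the geometric series $\sum_{m\ge-\ell}(q/\tilde q)^m$, and apply Cauchy's integral formula to the resulting contour integral in $\tilde q$.)

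I do not expect a genuine obstacle here, as the real content has already been assembled in the preceding lemma; what remains is essentially bookkeeping. The one point to treat carefully is that $f_{2-k}^{(\chi\psi^{-2k})}$ has a pole of order exactly $1+\ell$ at the cusp, so that the Laurent expansion of $G_z$ begins precisely at $q^{-\ell}$ (with leading coefficient $f_k^{(\chi)}(z)$, as it should), matching the range of summation on the left. As in \cite{duke-jenkins}, the resulting identity can be read as a duality between $\mathcal M_k(\Gamma_0(N)^+,\chi)$ and $\mathcal M_{2-k}(\Gamma_0(N)^+,\chi\psi^{-2k})$.
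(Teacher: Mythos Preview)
Your proof is correct and follows essentially the same approach as the paper, which simply refers to the proof of Theorem~2 in \cite{duke-jenkins} via Cauchy's integral formula and the preceding lemma. Your observation that the third integral representation of the lemma is precisely the Cauchy formula for the Laurent coefficients of $G_z(\tau)$, together with your verification that the expansion begins at $q^{-\ell}$, is exactly the content of that argument.
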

 \begin{proof}
This is proved as in the proof of Theorem 2 in \cite{duke-jenkins},
using Cauchy's integral formula and the lemma above.
 \end{proof}
 \begin{remark}
Notice that in level $N\not= 1$ the characters $\chi$ belonging to
$f_k$ and $\chi\psi^{-2k}=\chi\psi^{(2-k)-k+2}$ to $f_{2-k}$ are not from the
same series of characters $\chi \psi^r$ ($r \in \Z$) but instead are related by switching from this series
to the ``dual'' series of ${\chi \psi^{r+2}}$ ($r \in \Z$).
 \end{remark}
 \begin{corollary}
With notations as before we have for all integers $m,n$ the equality
 \begin{equation*}
 a_k^{(\chi)} (m,n) = -a_{2-k}^{{(\chi \psi^{-2k})}} (n,m).
 \end{equation*}
 \end{corollary}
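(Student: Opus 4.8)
The plan is to read the duality off the generating identity of the Theorem above, applied twice. First I would apply that Theorem with $2-k$ in place of $k$ and $\chi\psi^{-2k}$ in place of $\chi$; this is legitimate because the character dual to $\chi\psi^{-2k}$ in weight $2-k$ is $\chi\psi^{-2k}\cdot\psi^{-2(2-k)}=\chi\psi^{-4}=\chi$ (using $\psi(W_N)=i^{-1}$, so $\psi^{4}={\bf 1}$), and because the relation $k'(N,\chi\psi^{-2k},2-k)=2+k_1(N)-k'(N,\chi,k)$ recorded above shows that the analogue $\ell'$ of $\ell$ for $(2-k,\chi\psi^{-2k})$ satisfies $\ell'=-1-\ell$. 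After swapping the roles of the two variables, the Theorem then reads
\[
\sum_{n\ge-\ell'}f_{2-k,n}^{(\chi\psi^{-2k})}(\tau)\,q_z^{n}=\frac{f_{2-k}^{(\chi\psi^{-2k})}(\tau)\,f_k^{(\chi)}(z)}{j(z)-j(\tau)}\qquad (q_z=\exp(2\pi i z)),
\]
whose right-hand side is $-1$ times that of the Theorem in its original form.

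Consequently the single two-variable kernel $\Phi(z,\tau):=\frac{f_k^{(\chi)}(z)\,f_{2-k}^{(\chi\psi^{-2k})}(\tau)}{j(\tau)-j(z)}$ carries two Laurent expansions: $\Phi=\sum_{m\ge-\ell}f_{k,m}^{(\chi)}(z)\,q_\tau^{m}$ valid for $\operatorname{Im}\tau>\operatorname{Im}z$, and $\Phi=-\sum_{n\ge-\ell'}f_{2-k,n}^{(\chi\psi^{-2k})}(\tau)\,q_z^{n}$ valid for $\operatorname{Im}z>\operatorname{Im}\tau$. Near the cusp these two regions are separated only by the diagonal $q_\tau=q_z$, where $\Phi$ has its sole remaining pole $j(\tau)=j(z)$, so the coefficient of $q_\tau^{m}q_z^{n}$ in the two expansions can differ only by a contribution of the residue of $\Phi$ there. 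By Lemma \ref{hauptmodule_lemma_2} and Corollary \ref{hauptmodule_cor} one has $f_k^{(\chi)}(w)\,f_{2-k}^{(\chi\psi^{-2k})}(w)=\frac{E_{2+k_1(N)}^{(\chi^{(N)})}(w)}{\Delta_N(w)}=-q_w\frac{dj_N}{dq_w}$, whence $\operatorname{Res}_{q_\tau=q_z}\Phi(z,\tau)=-q_z$, a single monomial. Therefore, for every $(m,n)$ with $m+n\ne0$ the two expansions have the same $q_\tau^{m}q_z^{n}$-coefficient; comparing them — and using the convention that $a_k^{(\chi)}(m,n)=0$ unless $m\ge-\ell$ and $n\ge\ell+1$, and likewise for the dual — gives $a_k^{(\chi)}(m,n)=-a_{2-k}^{(\chi\psi^{-2k})}(n,m)$ in that range. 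This is made rigorous exactly as in \cite{duke-jenkins}: using the contour-integral representation of $f_{k,m}^{(\chi)}(z)$ established above, extract the $q_z^{n}$-coefficient by a second contour integral, and move the $q_\tau$-contour across $|q_\tau|=|q_z|$, picking up the residue just computed.

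The one point needing care — and I expect it to be the main obstacle — is the diagonal $m+n=0$, where the principal parts $q^{-m}$ of the $f_{k,m}^{(\chi)}$ (which are not among the coefficients $a_k^{(\chi)}(m,n)$) interfere. Here I would use $\ell'=-1-\ell$ to note that the conditions $m\ge-\ell$ and $-m\ge-\ell'$ are mutually exclusive for every integer $m$, so at most one of $f_{k,m}^{(\chi)}$ and $f_{2-k,-m}^{(\chi\psi^{-2k})}$ is defined; whichever is defined, the coefficient of the relevant $q$-power is its leading coefficient $1$, which is not counted among the $a$'s, so both $a_k^{(\chi)}(m,-m)$ and $a_{2-k}^{(\chi\psi^{-2k})}(-m,m)$ vanish and the asserted identity reads $0=0$. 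The same observation disposes of the remaining edge cases ($m<-\ell$ or $n<-\ell'$), where both sides vanish outright, so the equality holds for all integers $m,n$.
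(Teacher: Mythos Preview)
Your argument is correct and follows the same route the paper indicates (``Again as in \cite{duke-jenkins}''): you apply the generating identity of the preceding Theorem for both $(k,\chi)$ and $(2-k,\chi\psi^{-2k})$, use the antisymmetry of the kernel under $(z,\tau)\leftrightarrow(\tau,z)$, and compare double $q$-expansions, with the residue/diagonal bookkeeping making the comparison rigorous. Your verification of the auxiliary facts $\psi^4={\bf 1}$, $k'(N,\chi\psi^{-2k},2-k)=2+k_1(N)-k'(N,\chi,k)$ and $\ell'=-1-\ell$, and your treatment of the edge cases $m+n=0$, are exactly the level-$N$ adaptations needed beyond \cite{duke-jenkins}.
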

 \begin{proof}
Again as in \cite{duke-jenkins}.
 \end{proof}
 \begin{theorem}
With notations as before let $k$ be such that $k = k'(N,\chi,k)$, let $m,n \in \Z$ with 
 \begin{equation*}
{\rm gcd}(m,n) = 1 = {\rm gcd}(N,m).
 \end{equation*}
Then $n^{k-1}~|~a_k^{(\chi)} (m,n)$.
 \end{theorem}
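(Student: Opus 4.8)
The plan is to transfer the Hecke-operator argument of \cite{duke-jenkins}. The hypothesis $k=k'(N,\chi,k)$ forces $\ell=0$, so $f_{k,m}^{(\chi)}=q^{-m}+\sum_{n\ge1}a_k^{(\chi)}(m,n)q^{n}$ with integral coefficients for every $m\ge0$, with $f_{k,0}^{(\chi)}=E_k^{(\chi)}$, and $M_k(\Gamma_0(N)^+,\chi)$ is one-dimensional; in particular a holomorphic element of $\mathcal M_k(\Gamma_0(N)^+,\chi)$ which is $O(q)$ at $\infty$ must be $0$. Since the asserted divisibility is multiplicative in $n$, it suffices to prove $p^{s(k-1)}\mid a_k^{(\chi)}(m,n)$ for each prime power $p^{s}\parallel n$, and $\gcd(m,n)=1$ forces $p\nmid m$ for every such $p$. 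I would treat the cases $p\nmid N$ and $p\mid N$ separately.

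For $p\nmid N$ the Hecke operator $T_p$ preserves the nebentypus $\chi|_{\Gamma_0(N)}$, commutes with all Atkin--Lehner involutions $W_d$ ($d\mid N$), and sends forms with integral $q$-expansion to such forms (here $\chi(p)=\pm1$, as $\chi$ is real and $p$ is unramified), so it maps $\mathcal M_k(\Gamma_0(N)^+,\chi)$ into itself. For $p\nmid m$, inspecting $q$-expansions shows that the only negative exponent occurring in $f_{k,m}^{(\chi)}\,|\,T_p$ is $-mp$, with coefficient $\chi(p)p^{k-1}$, and its constant term vanishes (as $f_{k,m}^{(\chi)}$ has no constant term when $m\ge1$); hence $f_{k,m}^{(\chi)}\,|\,T_p-\chi(p)p^{k-1}f_{k,mp}^{(\chi)}$ is a holomorphic $O(q)$ element of the one-dimensional space and therefore $0$:
$$f_{k,m}^{(\chi)}\,|\,T_p=\chi(p)\,p^{k-1}\,f_{k,mp}^{(\chi)}\qquad(p\nmid N,\ p\nmid m).$$
The same computation with $p\mid mp^{r}$ gives $f_{k,mp^{r}}^{(\chi)}\,|\,T_p=\chi(p)p^{k-1}f_{k,mp^{r+1}}^{(\chi)}+f_{k,mp^{r-1}}^{(\chi)}$ for $r\ge1$; feeding both into the recursion $T_{p^{s}}=T_pT_{p^{s-1}}-\chi(p)p^{k-1}T_{p^{s-2}}$ and inducting on $s$ yields $f_{k,m}^{(\chi)}\,|\,T_{p^{s}}=\chi(p)^{s}p^{s(k-1)}f_{k,mp^{s}}^{(\chi)}$ whenever $p\nmid m$. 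Now compare the coefficient of $q^{j}$ for $j$ with $\gcd(j,p)=1$: on the left the standard formula for the coefficients of $f\,|\,T_{p^{s}}$ collapses (since $\gcd(j,p^{s})=1$) to $a_k^{(\chi)}(m,jp^{s})$, while on the right it is $\chi(p)^{s}p^{s(k-1)}a_k^{(\chi)}(mp^{s},j)$; since $\chi(p)^{s}=\pm1$ and $a_k^{(\chi)}(mp^{s},j)\in\Z$, we conclude $p^{s(k-1)}\mid a_k^{(\chi)}(m,jp^{s})$, and $j=n/p^{s}$ disposes of every prime $p\mid n$ with $p\nmid N$.

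The case $p\mid N$ is the genuinely difficult one, since $\Gamma_0(N)^+$ has no Hecke operator at $p$; one must instead exploit that the Atkin--Lehner involution $W_p$ itself lies in $\Gamma_0(N)^+$, so that $f_{k,m}^{(\chi)}$ is a $W_p$-eigenform with eigenvalue $\chi(W_p)$. The idea would be to combine this with the operator $U_p\colon\sum a(\nu)q^{\nu}\mapsto\sum a(p\nu)q^{\nu}$ on weakly holomorphic forms for $\Gamma_0(N)$ (note $f_{k,m}^{(\chi)}\,|\,U_p$ is holomorphic at $\infty$, since $p\nmid m$): writing $N=pN'$ and carrying out the classical decomposition of the matrices $\begin{pmatrix}1&j\\0&p\end{pmatrix}W_p$, $0\le j<p$, into a congruence-subgroup part and an upper triangular part — where a quadratic Gauss sum $\sum_{d\bmod p}\big(\tfrac{d}{p}\big)e^{-2\pi i\nu d/p}$ enters precisely in the composite cases $N=14,\ p=7$ and $N=15,\ p\in\{3,5\}$ in which $\chi$ is ramified at $p$ — should produce a relation of the shape $(f_{k,m}^{(\chi)}\,|\,U_p)\,|_{k}W_p=p^{k-1}\chi(W_p)\,(f_{k,m}^{(\chi)}\,|\,V_p)+(\text{terms of smaller pole order at }\infty)$, where $(g\,|\,V_p)(\tau)=g(p\tau)$, in which the decisive factor $p^{k-1}$ appears. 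The hard part is to leverage this (together with its $U_p$-iterates) into $p^{s(k-1)}\mid a_k^{(\chi)}(m,jp^{s})$ for $p\nmid j$: the obstruction is that $f_{k,m}^{(\chi)}\,|\,U_p$ is no longer modular for $\Gamma_0(N)^+$, so one must keep track of its expansions at the remaining cusps of $\Gamma_0(N)$, and this bookkeeping — especially in the ramified composite cases — is, I expect, the technically most demanding point of the whole argument. Once both cases are in hand, multiplying over the prime powers dividing $n$ gives $n^{k-1}=\prod_{p^{s}\parallel n}p^{s(k-1)}\mid a_k^{(\chi)}(m,n)$.
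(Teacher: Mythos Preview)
Your treatment of primes $p\nmid N$ is correct and is exactly the transfer of the Duke--Jenkins argument that the paper invokes: apply the Hecke operator to $f_{k,m}^{(\chi)}$, use that $M_k(\Gamma_0(N)^+,\chi)$ is one-dimensional (spanned by the Eisenstein series) to identify the image, and read off the divisibility. There is in fact no need to work one prime at a time; for any $n$ with $\gcd(n,N)=1$ and $\gcd(m,n)=1$ the principal part of $f_{k,m}^{(\chi)}\mid T_n$ is $\chi(n)\,n^{k-1}q^{-mn}$, so $f_{k,m}^{(\chi)}\mid T_n=\chi(n)\,n^{k-1}f_{k,mn}^{(\chi)}$ and hence $a_k^{(\chi)}(m,n)=\chi(n)\,n^{k-1}a_k^{(\chi)}(mn,1)$.

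The obstruction you isolate at primes $p\mid N$ is genuine, but observe that the paper's one-line proof does not overcome it either: ``proved in the same way as Theorem~3 of \cite{duke-jenkins}'' means \emph{apply $T_n$}, and that operator preserves $\mathcal M_k(\Gamma_0(N)^+,\chi)$ only for $\gcd(n,N)=1$. Nothing in this argument uses $\gcd(N,m)=1$; what is actually needed is $\gcd(N,n)=1$. So either the stated hypothesis is a slip for $\gcd(N,n)=1$ --- in which case your case $p\mid N$ is vacuous and your proof is already complete --- or the paper is claiming more than its indicated proof establishes. Your $U_p$/$W_p$ outline is a reasonable starting point toward the stronger statement, but, as you acknowledge, the passage from a single $U_p$--$W_p$ relation to the iterated divisibility $p^{s(k-1)}\mid a_k^{(\chi)}(m,jp^{s})$ is not carried out, and controlling the behaviour at the other cusps of $\Gamma_0(N)$ after applying $U_p$ is exactly where such arguments tend to break down.
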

 \begin{proof}
This is proved in the same was as theorem 3 of \cite{duke-jenkins}, using the fact that the condition
$k = k'(N,\chi,k)$ implies that $M_k(\Gamma_0(N)^+,\chi)$ is one-dimensional, spanned by the Eisenstein
series in it.
 \end{proof}
 \begin{remark}
It is not clear whether it is possible to generalize the result of Theorem 1 of \cite{duke-jenkins} on zeros of
the $f_{k,m}$ to the present situation. A start toward such a generalization is given by Corollary \ref{eisenstein_zeros}.
 \end{remark}

\end{document}